\documentclass[12pt,a4paper]{article}
\usepackage[T1]{fontenc}
\usepackage{amssymb,tikz,graphicx}
\usepackage{lmodern}
\usepackage{amsmath,amssymb,amsthm}
\usepackage[margin=1in]{geometry}
\usepackage{microtype}
\usepackage[labelsep=period]{caption}
\usepackage{placeins}
\usepackage{needspace}
\usepackage[hidelinks]{hyperref}
\newtheorem{theorem}{Theorem}[section]
\newtheorem{corollary}[theorem]{Corollary}
\newtheorem{proposition}[theorem]{Proposition}
\newtheorem{lemma}[theorem]{Lemma}
\newcommand{\Z}{\mathbb Z}
\newcommand{\Pfin}{\mathcal P_{\mathrm{fin}}}
\newcommand{\supp}{\operatorname{supp}}

\title{Thompson's group $F$ is not weakly amenable }
\author{Koji Fujiwara
\thanks{
supported by JSPS Grant-in-Aid for Scientific Research 25H00588.}
}
\date{}
\begin{document}
\maketitle
\vspace{-4mm}

\begin{abstract}
Let $F$ be Thompson's group, and let $D$ be the set of dyadic
rationals in $[0,1]$.
We give an embedding of
$\mathbb Z^{(D)}\rtimes F$ into $F$, where the semidirect product
is defined by the natural action of $F$ on $D$.
Using this embedding,
we deduce from the nonamenability of $F$ that $F$ is not weakly amenable.
\end{abstract}

\section{Introduction}
\subsection{Thompson's group \texorpdfstring{$F$}{F}}
Let $F$ be Thompson's group, the group of  piecewise linear homeomorphisms of
$[0,1]$ with finitely many dyadic breakpoints and slopes in $2^{\Z}$.
A rational number is dyadic if it belongs to
$\Z[1/2]=\{k/2^n:k\in\Z,\ n\geq0\}$. Put
\[
 D=\Z[1/2]\cap[0,1],\qquad D_0=D\setminus\{0,1\}.
\]
The group $F$ acts  on $D$ and preserves $D_0$.

A \emph{mean} on a set $Y$ is a finitely
additive probability measure on all subsets of $Y$. It satisfies
$m(Y)=1$, $m(A)\geq0$, and $m(A\cup B)=m(A)+m(B)$ for disjoint
$A,B\subseteq Y$. For an action of $G$ on $Y$, it is
\emph{invariant} if $m(gA)=m(A)$ for all $g\in G$ and $A\subseteq Y$.
A group is \emph{amenable} if its action on itself by left translations
admits an invariant mean.

OpenAI \cite{OpenAI} proved that $F$ is nonamenable.
This has been a long standing question. 
\begin{theorem}[OpenAI]\label{thm:noname}
    Thompson's group $F$ is not  amenable.
\end{theorem}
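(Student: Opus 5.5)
In this paper Theorem~\ref{thm:noname} is quoted from \cite{OpenAI} and used as a black box. The argument for weak amenability only needs its conclusion, that $F$ admits no left-invariant mean. If I had to prove it here, the plan would be as follows; the hard step is the one I cannot currently supply.

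\textbf{Reduction to a nonamenable action.} An amenable group admits an invariant mean for every action: push the invariant mean on $G$ forward along an orbit map $g\mapsto gy$. So it suffices to find one set $Y$ with an $F$-action admitting no invariant mean. The natural action on $D$ will not work, because its Schreier graph with respect to the standard generators $x_0,x_1$ is known to be amenable. I would therefore take $Y$ to be a richer $F$-set, such as the orbit of a finite configuration of points, or of a finite partition into dyadic intervals, in $[0,1]$.

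\textbf{Kesten's criterion.} On such a $Y$, I would consider the symmetric random walk driven by $x_0^{\pm1},x_1^{\pm1}$. By Kesten's criterion, it suffices to show that return probabilities decay exponentially. Equivalently, the Markov operator $P$ on $\ell^2(Y)$ should satisfy $\|P\|<1$.

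\textbf{Obtaining the spectral gap.} I would try to establish this by a paradoxical-decomposition or ping-pong argument on $Y$. Concretely, I would partition $Y$ according to combinatorial data of the configuration, such as the depth of the leftmost breakpoint or its behaviour under the $2^{\Z}$ slope scaling near $0$. The goal is a finite partition $Y=\bigsqcup_i A_i$ together with elements $g_i\in F$ such that the translates $g_iA_i$ cover $Y$ at least twice, which is a Tarski paradox. Failing that, I would aim for a Cheeger-type inequality $|\partial A|\geq c|A|$ for all finite $A\subseteq Y$ with some $c>0$.

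\textbf{Main obstacle.} The whole difficulty lies in this isoperimetric estimate. The known Følner-type constructions for the $F$-actions studied so far succeed precisely because long stretches of the orbit look like amenable $\Z$-orbits. The choice of $Y$ and of the partition must defeat all such stretches uniformly. I do not have a concrete way to achieve this, and for that step I simply rely on \cite{OpenAI}.
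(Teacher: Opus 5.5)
Citing \cite{OpenAI} is all the paper does for this statement as well. Beyond the citation, it only notes that Kim--Lodha's Theorem~\ref{thm:kl} implies it, because every action of an amenable group is extensively amenable \cite[Lemma~2.1]{JMMdS}. So your first paragraph matches the paper.

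The independent plan you sketch, however, is not just missing its key estimate: for the sets $Y$ you propose, that estimate is false. Let $P\subset(0,1)$ be finite, for example a configuration of points or the breakpoint set of a finite partition into dyadic intervals. Use the usual generators, so that $x_1$ is the identity on $[0,1/2]$ and $x_0(t)=t/2$ there. Since $x_0(t)<t$ on $(0,1)$, there is $m_0$ with $x_0^{m_0}P\subseteq(0,1/2]$. Then $x_0^mP=2^{-(m-m_0)}x_0^{m_0}P$ for all $m\ge m_0$. Consider the sets $A_N=\{x_0^mP: m_0\le m\le m_0+N\}$. Each has $N+1$ elements, each member of $A_N$ is fixed by $x_1^{\pm1}$, and $|x_0^{\pm1}A_N\mathbin{\triangle}A_N|\le 2$. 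Hence the orbit $F\cdot P$ carries a F{\o}lner sequence. Consequently the Markov operator on $\ell^2(F\cdot P)$ has norm $1$ and there is an $F$-invariant mean on $F\cdot P$. So there is no Tarski paradox and no inequality $|\partial A|\ge c|A|$, whatever partition by breakpoint depth or slope data you use. For $|P|=1$ this argument also recovers the amenability of the action on $D_0$.

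These $A_N$ are exactly the $\Z$-stretches you were worried about, and for orbits of finite configurations they cannot be avoided. The only input is that the stabilizer of $P$ contains every element of $F$ supported in $[\max P,1]$. So any $F$-set with such a point admits an invariant mean. A workable $Y$ must exclude this, and, by the symmetry $t\mapsto 1-t$, the analogous condition at $0$.

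The route through Theorem~\ref{thm:kl} that the paper mentions locates the obstruction elsewhere. That theorem asserts that there is no $F$-invariant mean $\nu$ on $\Pfin(D)$ with $\nu\{P:M\subseteq P\}=1$ for every finite $M\subseteq D$. The $F$-action on $\Pfin(D)$ by itself admits invariant means: the empty set is fixed, and each orbit is amenable by the computation above. What fails is invariance together with the support condition. By \cite{JMMdS}, this failure is equivalent to nonamenability of the action of the lamplighter group $\Pfin(D)\rtimes F$ on $\Pfin(D)$, where $(E,g)$ acts by $P\mapsto E\mathbin{\triangle}gP$. This is the same kind of lamp structure that Section~\ref{sec:mean} exploits with $\Z$-lamps.

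So a proof along your lines would have to bring in the lamps, or some other $F$-set avoiding the stabilizer condition above, and then prove the isoperimetric inequality there. As it stands, the step you defer to \cite{OpenAI} is the entire content of the theorem, and the concrete choices of $Y$ you offer for it cannot work.
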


There is a weaker notion of amenability. 
A countable discrete group $G$ is \emph{weakly amenable},
in the sense of Cowling and Haagerup, if there is a sequence
of finitely supported functions $\varphi_n:G\to\mathbb C$
converging pointwise to $1$ such that
there are Hilbert spaces $H_n$, maps
$\xi_n,\eta_n:G\to H_n$, and a constant $C<\infty$ with
\[
\varphi_n(y^{-1}x)=\langle\xi_n(x),\eta_n(y)\rangle,
\qquad
\|\xi_n(x)\|\le C,
\qquad
\|\eta_n(y)\|\le C
\]
for every $n$ and $x,y\in G$.
See \cite[Section~5, p.~1392]{GRT}.

Every subgroup of a weakly amenable group is weakly amenable.
This follows immediately by restricting $\varphi_n$, $\xi_n$,
and $\eta_n$ to the subgroup. As the name suggests, every amenable countable discrete group is weakly amenable
\cite[Section~1]{Oz}.
Also, every hyperbolic group is weakly amenable
\cite[Theorem~2]{OzHyp}.

We give a negative answer to the question of whether $F$ is weakly
amenable, mentioned by Guentner and Higson \cite[Introduction]{GH}. Our argument does not give a new proof
of nonamenability of $F$. We use the nonamenability of $F$.

\begin{theorem}\label{thm:nonwa}
Thompson's group $F$ is not weakly amenable.
\end{theorem}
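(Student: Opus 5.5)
The plan has two ingredients: an embedding of $\Z^{(D)}\rtimes F$ into $F$, and a known obstruction to weak amenability for wreath-type products with nonamenable acting group. Since weak amenability passes to subgroups, it suffices to show that $\Z^{(D)}\rtimes F$ is not weakly amenable.

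For the obstruction I would invoke the theorem of Ozawa (and Popa's related rigidity results for wreath products): if $A$ is a nontrivial group and $G$ acts on a set $Y$, then $A^{(Y)}\rtimes G$ weakly amenable forces the action $G\curvearrowright Y$ to be amenable, i.e.\ to admit a $G$-invariant mean on $Y$. The heuristic is this. Average a Herz--Schur multiplier $\varphi_n$ over the abelian normal subgroup $\Z^{(Y)}$ using invariant means on its finite-support subgroups. The multipliers that result are uniformly bounded and approximately invariant, and the cb-norm bound converts them into an approximately $G$-invariant family of finite subsets (or $\ell^1$ functions) on $Y$. A weak-$*$ limit then gives an invariant mean. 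I would cite this as a black box, quoting the form ``$\Z^{(Y)}\rtimes G$ weakly amenable $\Rightarrow$ $G\curvearrowright Y$ amenable.''

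Next, $F\curvearrowright D$ has no invariant mean. Suppose $m$ were invariant. The two endpoints $\{0,1\}$ are fixed, so restricting and normalizing as needed, $m$ is supported either on $\{0,1\}$ or on $D_0$. The first case is not automatically excluded, because $\{0,1\}$ is a fixed set and $\delta_0$ is an invariant mean on $D$. So I would apply the obstruction with $Y=D_0$: the group $\Z^{(D_0)}\rtimes F$ is a subgroup of $\Z^{(D)}\rtimes F$, and this is enough. On $D_0$ the action is transitive, with stabilizer of $1/2$ equal to $F_{[0,1/2]}\times F_{[1/2,1]}\cong F\times F$. An invariant mean on $D_0=F/\mathrm{Stab}$ would be an $F$-invariant mean on this coset space, which makes $F$ co-amenable relative to $F\times F$. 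Since $F\times F$ is nonamenable (it contains $F$), this does not immediately contradict Theorem~\ref{thm:noname}. A cleaner route is to push $m$ forward under the germ at $0$: the map $D_0\to$ (slopes) is awkward, so I would instead use the following argument. The map $x\mapsto$ the germ of $F$-elements is not available directly. However, $F$ contains a copy of $F$ acting on $D\cap(0,1/2)$ whose restriction to $[1/2,1]$ is trivial, and an invariant mean on $D_0$ restricted to that copy together with the standard ``ping-pong by doubling maps'' argument yields an invariant mean on $F$ itself via the free transitive action on a suitable orbit of a conjugate copy. I expect this reduction to be delicate, and I would replace it by the simpler observation that amenability of the action $F\curvearrowright D_0$ is equivalent to amenability of $F$, in the sense of Monod and others for $F$ acting on dyadics. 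Then Theorem~\ref{thm:noname} finishes the argument.

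The main obstacle is the embedding $\Z^{(D)}\rtimes F\hookrightarrow F$. I would build it using the self-similarity of $F$. Conjugate $F$ into $F_{[0,1/2]}$ by the affine scaling, giving an embedding $\iota$. For each $d\in D$, attach an element $t_d$ supported in a small dyadic interval $J_d\subset[1/2,1]$. The $J_d$ must be arranged $F$-equivariantly, with $\iota(g)\,t_d\,\iota(g)^{-1}=t_{gd}$, and $t_d$ should generate $\Z$ with disjoint supports so that the $t_d$ commute. Equivariance fails if $\iota(g)$ is trivial on $[1/2,1]$. So I would instead use an embedding of $F$ acting diagonally: $g\mapsto$ $g$ on $[0,1/2]$ together with a ``blown-up'' copy on $[1/2,1]$, in which each dyadic point $d$ is replaced by a small interval and $t_d$ is a bump supported there. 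Making this piecewise linear with finitely many breakpoints, which requires finitely many interval lengths locally, is the key technical step. I would try intervals of lengths determined by the slope of $g$ at $d$, mimicking the ``Cantor-function blow-up'' of the dyadics.
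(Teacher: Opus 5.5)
\textbf{The gap is in the obstruction step.} The black box you quote says that weak amenability of $\Z^{(Y)}\rtimes G$ forces a $G$-invariant mean on $Y$. That statement is true; it follows from the stronger statement described below. But its conclusion is too weak to contradict anything here, because the action of $F$ on $D_0$ \emph{is} amenable. Take generators $x_0,x_1$ of $F$ with $x_0(t)=t/2$ on $[0,1/2]$ and $x_1$ the identity on $[0,1/2]$. The sets $A_N=\{2^{-k}:N\le k\le 2N\}$ satisfy $|x_0A_N\triangle A_N|=2$ and $x_1A_N=A_N$, so they form a F{\o}lner sequence for the action. A weak-$*$ limit of their normalized counting measures is then an $F$-invariant mean on $D_0$. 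So your ``simpler observation'', that amenability of $F\curvearrowright D_0$ is equivalent to amenability of $F$, is false: it would make $F$ amenable, contradicting Theorem~\ref{thm:noname}. For the same reason, the ping-pong sketch that was supposed to turn an invariant mean on $D_0$ into one on $F$ cannot be repaired. You correctly saw that the fixed points $0,1$ make amenability of the action on $D$ trivial, but passing to $D_0$ does not help.

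\textbf{The missing idea is extensive amenability.} What Ozawa's theorem actually yields, and what the paper uses, is a mean $m$ on $\Z^{(X)}$ invariant under translations and under conjugation by $G$. Translation by $e_x$ shifts $\{\eta:\eta(x)=k\}$ to $\{\eta:\eta(x)=k+1\}$, so all these sets have equal mass, which forces $m\{\eta:\eta(x)=0\}=0$. Pushing $m$ forward under $\eta\mapsto\supp(\eta)$ then gives a $G$-invariant mean on $\Pfin(X)$ that assigns full mass to $\{P:M\subseteq P\}$ for every finite $M$. This property is not trivialized by fixed points, and it is exactly what is known to fail for $F$. Chornyi proved that $F\curvearrowright D_0$ is extensively amenable if and only if $F$ is amenable, and Kim--Lodha show directly that $F\curvearrowright D$ is not extensively amenable.

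\textbf{The embedding is also incomplete.} Your sketch leaves the key step open. Choosing the lengths of $J_d$ according to the slope of $g$ at $d$ cannot work, since a single family $\{J_d\}$ must serve every $g$ at once. The paper fixes the $J_d$ by a self-similar rule: each standard dyadic interval is cut in proportions $1/2,1/4,1/4$, and the middle piece is $J_d$ for its midpoint $d$. It then lets $\rho(f)$ map each $J_d$ affinely onto $J_{f(d)}$ on all of $[0,1]$; no diagonal copy on $[0,1/2]$ is needed. The crucial check is that whenever $f$ maps a standard dyadic interval affinely onto another, $\rho(f)$ is affine on the corresponding blown-up interval. This gives finitely many pieces, so $\rho(f)\in F$. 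The lamps are affine copies of a fixed nontrivial $z\in F$ placed in the $J_d$.
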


We recall the definition of extensive amenability.
Let $G$ be a group acting on a set $X$.
Write $\Pfin(X)=\{P\subseteq X:P\text{ is finite}\}$, with the
induced action $gP=\{gx:x\in P\}$. An action of $G$ on $X$
is \emph{extensively amenable} if there is a $G$-invariant mean $\nu$
on $\Pfin(X)$ such that for every finite $M\subseteq X$, 
\begin{equation*}\label{eq:ea}
 \nu\{P\in\Pfin(X):M\subseteq P\}=1.
\end{equation*}
Every action of an amenable group is extensively amenable
\cite[Lemma~2.1]{JMMdS}.
See \cite{JMMdS} for more information on this notion.

Kim and Lodha \cite{KL} gave a streamlined proof of Theorem \ref{thm:noname}.
Their proof explicitly establishes the following statement \cite[Section~2]{KL}.

\begin{theorem}[Kim--Lodha, {\cite[Section~2]{KL}}]\label{thm:kl}
The natural action of $F$ on $D$ is not extensively amenable.
\end{theorem}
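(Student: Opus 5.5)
The paper quotes this theorem from Kim--Lodha rather than proving it, so what follows is my own plan, and its key step is not settled. The approach is by contradiction: assume an $F$-invariant mean $\nu$ on $\Pfin(D)$ with $\nu\{P: M\subseteq P\}=1$ for every finite $M$, and produce from it an $F$-invariant mean on $F$ itself (or on a coset space with amenable stabilizers). That would contradict Theorem~\ref{thm:noname}.

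First I would use that the action is nontrivial only through order-preserving maps. Since $F$ preserves the order on $[0,1]$, it maps consecutive pairs of a finite set $P\cup\{0,1\}$ to consecutive pairs of $gP\cup\{0,1\}$. Averaging uniformly over the consecutive pairs of $P$ is therefore an $F$-equivariant map from $\Pfin(D)$ to finitely supported probability measures on
\[
Y=\{(a,b)\in D^2: a<b\}.
\]
Pushing $\nu$ forward through it gives an $F$-invariant mean $\mu$ on $Y$. Taking $M=\{k/2^n\}$ shows that $\mu$ is concentrated on pairs with $b-a\le 2^{-n}$ for every $n$. By the same averaging, $\mu$ also gives almost no weight to pairs with an endpoint in any fixed finite set. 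So $\mu$ lives on ever shorter intervals with dyadic endpoints in $D_0$.

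Next I would analyse this structure. $F$ acts transitively on pairs $a<b$ in $D_0$, but the stabilizer of such a pair is essentially $F\times F\times F$, which is not amenable. Transitivity alone therefore gives nothing, and the smallness information must be used. My plan is to exploit the second coordinate of the structure: the germs of $g$ at $a$ and at $b$. These lie in $\Z\times\Z$ via slopes, an amenable quotient. I would then refine $\mu$ to a mean on the pairs $(a,b)$ together with the restriction of the group element to $[a,b]$, normalized by the canonical dyadic affine identification $[a,b]\cong[0,1]$. The goal is that invariance of $\nu$ under the whole of $F$, combined with concentration on short intervals, forces a mean on $F$ (acting on the middle interval) that is invariant under left translation.

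The main obstacle is exactly this last step: extracting from $\mu$ a left-invariant mean on $F$, rather than on a coset space with nonamenable stabilizer. The first thing I would try is a two-level averaging. Condition on the interval $[a,b]$, use the canonical affine map to transport the restriction of the configuration $P\cap[a,b]$ at a finer scale, and apply the containment condition $M\subseteq P$ at every scale to show that the induced configurations in $[0,1]$ again satisfy the hypotheses. This self-similarity should make the resulting mean on $\Pfin(D)$ invariant under the stabilizer copy of $F$ as well. Iterating, in the style of Kim--Lodha's ping-pong between scales, should produce an invariant mean on $F$ and contradict Theorem~\ref{thm:noname}. If that fails, the fallback is the Kim--Lodha approach itself: find finitely many elements of $F$ and disjoint families of finite sets such that $\nu$-invariance forces the total mass of these families to exceed $1$.
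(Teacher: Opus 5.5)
\textbf{There is a genuine gap: the step that would give the contradiction is left open, and the route you sketch toward it cannot reach it.}

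The paper does not prove this statement itself. It cites \cite{KL}, and notes that the statement also follows from Theorem~\ref{thm:noname} together with Chornyi's theorem \cite[Corollary~3]{Chornyi}: extensive amenability of $F\curvearrowright D_0$ (equivalently of $F\curvearrowright D$) forces $F$ to be amenable. Your contradiction strategy is this second route. But the implication ``extensively amenable $\Rightarrow$ $F$ amenable'' is its whole content, and that is exactly the step you leave open.

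Every property you extract into the pair-mean $\mu$ holds for a mean that exists unconditionally. Take a limit along a nonprincipal ultrafilter of the uniform probability measures on $\{(2^{-k-1},2^{-k}) : n\le k\le 2n\}$. It is invariant under the standard generators $x_0$ (which is $t\mapsto t/2$ on $[0,1/2]$) and $x_1$ (which is the identity on $[0,1/2]$), hence $F$-invariant. It is concentrated on arbitrarily short intervals and gives no mass to pairs with an endpoint in a fixed finite set. So no contradiction can be derived from $\mu$. Passing to a single consecutive pair discards exactly the information that $\nu$-almost every $P$ contains any prescribed finite set.

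The proposed refinement by ``the restriction of the group element to $[a,b]$'' has nothing to act on, since a configuration $P\in\Pfin(D)$ carries no element of $F$. Adjoining such a coordinate equivariantly would already require the kind of invariant mean you are trying to build. The self-similar iteration and the fallback to \cite{KL} are not arguments.

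Your target, a mean on an $F$-set with amenable (here trivial) stabilizers, is the right one, and so is your instinct about slopes. What is missing is to record slope data at \emph{every} point of the configuration, as lamp values.
\begin{itemize}
\item For $g\in F$ and $y\in D_0$, let $c(g)(y)\in\Z$ be the jump of $\log_2 g'$ at $g^{-1}(y)$. Then $c(g)\in\Z^{(D_0)}$, and the chain rule gives $c(gh)=c(g)+g\cdot c(h)$. So $g*\theta=c(g)+g\cdot\theta$ is an action of $F$ on $\Z^{(D_0)}$.
\item This action is free. Suppose $g*\theta=\theta$, and take a PL homeomorphism $\phi$ of $[0,1]$ (real slopes allowed) whose $\log_2$-slope jumps are given by $-\theta$. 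The stabilizer condition says $\phi\circ g$ has the same jumps as $\phi$, so $(\phi\circ g)'$ is a constant multiple of $\phi'$. The constant is $1$ because both maps fix $0$ and $1$, so $g=\mathrm{id}$.
\item Let $\lambda_{P,n}$ be the uniform probability measure on functions supported in $P\cap D_0$ with values in $\{-n,\dots,n\}$, and put $m_n(B)=\int\lambda_{P,n}(B)\,d\nu(P)$. Each $m_n$ is invariant under the linear action of $F$. Since $\nu\{P:x\in P\}=1$, it changes by at most $1/(2n+1)$ under translation by $e_x$.
\item An ultrafilter limit of the $m_n$ is then invariant under all translations and under $F$, hence under $*$. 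Pull it back along the equivariant bijection $(g,i)\mapsto g*\theta_i$, with $\theta_i$ orbit representatives. This gives a left-invariant mean on $F$, contradicting Theorem~\ref{thm:noname}.
\end{itemize}
Alternatively, simply cite Chornyi's equivalence as the paper does.
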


 Since every action of an amenable group is extensively amenable,
Theorem~\ref{thm:kl} implies Theorem \ref{thm:noname}.
Chornyi \cite[Corollary~3]{Chornyi} proved that $F$ is amenable
if and only if the action of $F$ on $D_0$ is extensively
amenable. Since $F$ fixes $0$ and $1$, extensive amenability of the actions
on $D_0$ and on $D$ is equivalent. Thus Theorem~\ref{thm:kl} also follows from OpenAI's theorem
and Chornyi's equivalence.

\subsection{Weak amenability and extensive amenability}

Let $X$ be a set.
For a function $\eta:X\to\Z$, define its support by
$
\supp(\eta)=\{x\in X:\eta(x)\ne0\}.
$
We say that $\eta$ is finitely supported if $\supp(\eta)$ is finite.
Write $\Z^{(X)}$ for the set of all finitely supported functions
from $X$ to $\Z$.
Addition is defined by
$
(\eta+\theta)(x)=\eta(x)+\theta(x).
$
With this addition, $\Z^{(X)}$ is an abelian group.

Let $G$ be a group acting on $X$.
For $g\in G$ and $\eta\in\Z^{(X)}$, define $g\cdot\eta$ by
\[
(g\cdot\eta)(x)=\eta(g^{-1}x)
\qquad (x\in X).
\]
This defines an action of $G$ on $\Z^{(X)}$ by group automorphisms.
The semidirect product $\Z^{(X)}\rtimes G$ consists of pairs
$(\eta,g)$ with $\eta\in\Z^{(X)}$ and $g\in G$.
Its multiplication is
\[
(\eta,g)(\theta,h)=(\eta+g\cdot\theta,gh).
\]
This group is called the {\it permutational wreath product} of $\Z$
by $G$ for the given action on $X$.

The following proposition relates weak amenability
and extensive amenability.
\begin{proposition}\label{lem:mean}
Let $G$ be a countable discrete group acting on a countable  set $X$.
If $\Z^{(X)}\rtimes G$ is weakly amenable, then
the action of $G$ on $X$ is extensively amenable.
\end{proposition}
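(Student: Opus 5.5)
The idea is to exploit the abelian normal subgroup $A=\Z^{(X)}$ of $W=\Z^{(X)}\rtimes G$. A weak-amenability multiplier restricted to $A$ is a bounded-norm approximate identity on $A$, and averaging it over the conjugation action of $G$ should produce an invariant measure on the dual of $A$ that concentrates near the trivial character. Extensive amenability can then be read off from this measure.

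\emph{Step 1.} Fix the Cowling--Haagerup sequence $\varphi_n$ on $W$ with constant $C$. First I will replace each $\varphi_n$ by a function that is invariant under conjugation by $G$ on $A$. Concretely, define
\[
\psi_n(a)=\int_G \varphi_n(g a g^{-1})\,d\text{``}m\text{''}(g),
\]
where the ``mean'' is taken over the $G$-orbit in a suitable sense. Since $G$ need not be amenable, I cannot average over $G$ directly. Instead I will use the standard Haagerup-type trick for weak amenability of groups with normal subgroups. The completely bounded multiplier $\varphi_n$ restricted to $A$ is a Herz--Schur multiplier of norm at most $C^2$. Because $A$ is abelian, every such multiplier is a Fourier--Stieltjes transform: $\varphi_n|_A=\hat\mu_n$ for a complex measure $\mu_n$ on the compact dual $\widehat A=\{0,1\}$-free torus $\mathbb T^X$ with $\|\mu_n\|\le C^2$. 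Pointwise convergence $\varphi_n\to1$ means $\mu_n\to\delta_1$ weakly* in Fourier coefficients. Also, $\varphi_n$ has finite support, so $\mu_n$ is absolutely continuous with a trigonometric-polynomial density. The needed $G$-invariance must come from the two-variable form $\varphi_n(y^{-1}x)=\langle\xi_n(x),\eta_n(y)\rangle$. Taking $x=ga$ and $y=g$ shows that conjugates are controlled uniformly, which gives the uniform bound on $|\mu_n|$ for all $G$-translates.

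\emph{Step 2.} I will build the mean on $\Pfin(X)$. For a character $\chi\in\mathbb T^X$, the natural finite-set data is the set where $\chi\neq1$. Instead I pass through supports: for $a\in A$ with $\varphi_n(a)\ne0$ the support $\supp(a)$ is finite. I define a probability-like weight on $\Pfin(X)$ by
\[
\nu_n(\mathcal S)=\frac{\sum_{a:\supp(a)\in\mathcal S}|\varphi_n(a)|^2\,w(a)}{\sum_a|\varphi_n(a)|^2 w(a)},
\]
with weights chosen so that the total mass is controlled by the $C$-bound, in the spirit of the Ozawa and Haagerup proofs that $\Z\wr\mathbb F_2$ is not weakly amenable. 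Then $\nu$ is a weak* cluster point along $n$. The conditions are checked as follows. For $G$-invariance, $\|g\cdot\nu_n-\nu_n\|\to0$ follows from $\varphi_n\to1$ at $g$ together with the factorization, via the estimate $|\varphi_n(gag^{-1})-\varphi_n(a)|\lesssim\|\xi_n(ga)-\xi_n(a g)\|$. For the containment condition, $\nu\{P\supseteq M\}=1$ follows because $\varphi_n(\delta_x)\to1$ for each $x\in M$. This forces mass onto sets $a$ with large coordinates spread everywhere, and one uses the independence (product) structure of characters on $\Z^{(X)}$.

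\emph{Main obstacle.} The genuine difficulty is Step 2: producing actual invariance from the uniform bound $C$ rather than from $C=1$. This is exactly where the known proofs for wreath products (Ozawa--Popa's $\Z\wr\mathbb F_2$ argument, and Ozawa's ``weak amenability passes to an amenable action on the dual'' lemma) are needed. My first attempt will be to invoke the Ozawa lemma. If $W$ is weakly amenable and $A\lhd W$ is abelian, then $W$ acts amenably on some invariant mean on $\ell^\infty(\widehat A)$-like data, namely there is a $G$-invariant state on $L^\infty(\widehat A)$ that is supported near the identity character. The final step is then to translate such a state into an extensively amenable mean. Using $\widehat A=\mathbb T^X$, I will push forward along $\chi\mapsto\{x:\chi_x\text{ is close to }1\}^c$ after a finite-to-$\Pfin$ reduction, adapting the Juschenko--Matte Bon--Monod--de la Salle characterization that extensive amenability is equivalent to the existence of an invariant mean on $\{0,1\}^X$ or $\mathbb T^X$ concentrating at the trivial point.
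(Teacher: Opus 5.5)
Your Steps 1--2 are not yet an argument. The average over $G$ is undefined, as you acknowledge, and the weights $w(a)$ are never specified. The bound the factorization actually gives, $|\varphi_n(gag^{-1})-\varphi_n(a)|\le C\|\xi_n(ag^{-1})-\xi_n(g^{-1}a)\|$, is true but gives no smallness. Pointwise convergence $\varphi_n\to1$ does not control the right-hand side, and certainly not uniformly in $a$, so $\|g\cdot\nu_n-\nu_n\|\to0$ does not follow. Likewise, $\varphi_n(\delta_x)\to1$ does not push the $|\varphi_n|^2$-weighted distribution of supports onto sets containing $x$. That would need the weights to be invariant under translation in $A$, and nothing in your construction provides this. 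Getting a conjugation-invariant object out of a uniformly bounded, rather than positive definite, approximate identity is exactly the content of Ozawa's theorem. So everything rests on your fallback. Invoking Ozawa is indeed what the paper does.

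The problem is that you state Ozawa's result in a vacuous form and then translate it in the wrong direction. A $G$-invariant state on $L^\infty(\widehat A)$ concentrated near the trivial character exists for every $G$: take the point mass at the trivial character. For the same reason, the JMMdS-type characterization as you state it is satisfied by a point mass for every action. The concentration must be at the point $X$ of $\{0,1\}^X$, which is what $\nu\{P:M\subseteq P\}=1$ expresses. Your push-forward $\chi\mapsto\{x:\chi_x\text{ not close to }1\}$ sends characters near $1$ to small sets (the empty set when $\chi=1$), the opposite of what containment requires, and it need not even land in $\Pfin(X)$.

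What Ozawa's Theorem A actually gives is a mean $m$ on $A=\Z^{(X)}$ itself, invariant under conjugation by $G$ \emph{and under translations by $A$}. The translation invariance is the ingredient you lost, and with it the proof is short:
\begin{itemize}
\item Set $\nu(\mathcal A)=m\{\eta:\supp(\eta)\in\mathcal A\}$. This is $G$-invariant because $\supp(g\cdot\eta)=g\,\supp(\eta)$.
\item Translation by $\delta_x$ carries $\{\eta:\eta(x)=k\}$ onto $\{\eta:\eta(x)=k+1\}$. These pairwise disjoint sets therefore all have the same $m$-measure, which must be $0$.
\item Hence $m\{\eta:M\subseteq\supp(\eta)\}=1$ for every finite $M\subseteq X$, which is the containment condition for $\nu$.
\end{itemize}
No Fourier analysis on $\widehat A$ and no use of the individual $\varphi_n$ is needed.
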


Proposition \ref{lem:mean} follows from 
Ozawa's theorem and \cite[Theorem 1.3]{JMMdS}.
For the reader's convenience, we give a proof in Section \ref{sec:mean}.

\subsection{Embedding the wreath product in \texorpdfstring{$F$}{F}}

We give an explicit and elementary construction of the following embedding.

\begin{proposition}\label{prop:embedding}
For the natural action of $F$ on $D$, there is an injective
homomorphism
\[
 \Z^{(D)}\rtimes F\longrightarrow F.
\]
\end{proposition}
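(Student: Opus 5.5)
Our plan is to realize $\Z^{(D)}\rtimes F$ inside $F$ by a ``blow-up'' of the dyadic points. We fix a homeomorphism $\phi:[0,1]\to[0,1]$, and we choose, for each $x\in D$, a small closed interval $I_x$ so that the intervals $I_x$ are pairwise disjoint. Their union will be dense, and the complement $C=[0,1]\setminus\bigcup_x \operatorname{int} I_x$ will be a Cantor-like set, with a monotone collapse map $\pi:[0,1]\to[0,1]$ sending each $I_x$ to $x$. The standard way to do this compatibly with dyadic combinatorics is at the level of infinite binary words. Write $x\in D_0$ as a finite word $w1$ followed by $0^\infty$, or equivalently $w0$ followed by $1^\infty$. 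We insert a fixed finite ``marker'' word to separate the two representatives. Concretely, we take the $2$-adic substitution $\sigma$ on $\{0,1\}^{\mathbb N}$ given by $0\mapsto 00$ and $1\mapsto 11$. The image of $\sigma$ is a Cantor set $C$ whose gaps are dyadic intervals whose left endpoints correspond to words $\sigma(u)01^\infty$. Each gap is then labelled by the dyadic $u$ (together with $0$, $1$ handled separately with one-sided collars). An element $g\in F$ acts by finitely many prefix replacements $u\mapsto v$. We define $\Phi(g)$ to act by $\sigma(u)\mapsto\sigma(v)$ on $C$, and on each gap $I_x$ by the unique affine map $I_x\to I_{gx}$. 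Since a prefix replacement changes the length by a bounded amount, the gap lengths scale by powers of $2$, so $\Phi(g)$ is piecewise linear with dyadic breakpoints. The cases with finitely many breakpoints also need care: inside each gap we must use an affine map (slope $2^k$), and we have to check that the pieces glue into finitely many pieces. This works because on a cone $\sigma(u)\{0,1\}^{\mathbb N}$ the map is a single affine map that carries the gaps inside that cone along.

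Next we choose, for each gap, a copy of $\Z$ supported in it. We fix a nontrivial element $t\in F$ supported in $[0,1]$, say with support equal to $(0,1)$ or $[1/4,3/4]$. For each $x$ we let $t_x$ be its conjugate by the affine map $[0,1]\to I_x$, so that $t_x$ is supported in $I_x$ and lies in $F$, because $I_x$ is a standard dyadic interval. We then define
\[
\Psi(\eta,g)=\Bigl(\prod_{x\in D} t_x^{\eta(x)}\Bigr)\Phi(g).
\]
The $t_x$ have disjoint supports, so they commute and generate $\Z^{(D)}$ injectively. Because $\Phi(g)$ maps $I_x$ affinely onto $I_{gx}$, we get $\Phi(g)t_x\Phi(g)^{-1}=t_{gx}$. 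This is exactly the relation $g\cdot\delta_x=\delta_{gx}$, and it makes $\Psi$ a homomorphism once $\Phi$ is one. $\Phi$ is a homomorphism because it is defined by conjugating the action on $\{0,1\}^{\mathbb N}$ through $\sigma$ and extending canonically over the gaps. Injectivity follows from two facts. First, $\Phi(g)$ determines $g$, by restricting to $C$ and using $\pi$. Second, if $\Phi(g)=\mathrm{id}$ then $\Psi(\eta,g)$ acts on each gap by $t_x^{\eta(x)}$, and these are nontrivial unless $\eta=0$.

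The main obstacle is making the gap structure canonical enough that $\Phi(g)$ carries $I_x$ affinely onto $I_{gx}$ with a dyadic slope for every $x$, including $x=0,1$, while still having finitely many breakpoints. To handle this, we will verify everything on the tree-pair (prefix-replacement) description of $g$. An element $g$ is given by finitely many pairs of cones $u_i\{0,1\}^{\mathbb N}\mapsto v_i\{0,1\}^{\mathbb N}$. We must check two things. First, the dyadic points interior to a cone, and their gaps, are moved by a single affine map. Second, the dyadic points at the cone boundaries, where $x=u_i1^\infty=u_{i+1}0^\infty$, get gaps whose two halves are each moved affinely. This forces us to split each gap $I_x$ into a left half attached to the representative $\ldots01^\infty$ and a right half attached to $\ldots10^\infty$. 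We must then check that the two halves are scaled by the same factor, or else adjust $t_x$ to be supported in only one half.
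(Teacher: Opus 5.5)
This is essentially the paper's proof. Your substitution $\sigma$ realizes the same blow-up of dyadic points into the gaps of a self-similar dyadic Cantor set, with proportions $1/4,1/2,1/4$ instead of the paper's $1/2,1/4,1/4$. As in the paper, $F$ acts affinely gap to gap, affineness is checked cone by cone as in Lemma~\ref{lem:rho-in-F}, and affinely conjugated copies of a fixed element sit in the gaps. Injectivity comes from faithfulness on the Cantor set, and $0,1$ are handled by collars as in the paper's $\Z^2\times(\Z^{(D_0)}\rtimes F)$ step.

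The worry in your last paragraph is unfounded. For $\sigma$, the gap $I_x$ at a cone boundary $x=0.w1$ lies strictly between the hulls $[0.\sigma(u_i),\,0.\sigma(u_i)+4^{-|u_i|}]$ of the two adjacent cones. Your own definition sends it affinely onto $I_{gx}$, so no splitting into halves is needed and $\Phi(g)$ has finitely many affine pieces; this matches the paper's remark that $\rho(f)$ is affine on each selected $J_d$ by definition. There are also two minor slips. First, $\sigma(u)01^\infty$ is the midpoint of its gap, not the left endpoint. Second, $I_{1/2}=[1/4,3/4]$ is not a standard dyadic interval, but dyadic endpoints and length in $2^{\Z}$ already give $t_x\in F$.
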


We prove Proposition \ref{prop:embedding} in Section \ref{sec:embed}.
The proof uses neither Theorem~\ref{thm:kl} nor Theorem \ref{thm:nonwa}.
Combining Propositions \ref{lem:mean} and \ref{prop:embedding},
we obtain the following. 

\begin{corollary}\label{prop:reduction}
If $F$ is weakly amenable, then the action of $F$ on $D$ is extensively
amenable.
\end{corollary}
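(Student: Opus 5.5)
The corollary follows directly from Proposition~\ref{prop:embedding} and Proposition~\ref{lem:mean}, using that weak amenability passes to subgroups.

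Assume $F$ is weakly amenable. Proposition~\ref{prop:embedding} gives an injective homomorphism $\iota:\Z^{(D)}\rtimes F\to F$, where the semidirect product uses the natural action of $F$ on $D$. Its image $\iota(\Z^{(D)}\rtimes F)$ is a subgroup of $F$.

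As observed in the introduction, every subgroup of a weakly amenable group is weakly amenable: restrict $\varphi_n$, $\xi_n$, $\eta_n$ to the subgroup. The restrictions are still finitely supported, still converge pointwise to $1$, and satisfy the same factorization $\varphi_n(y^{-1}x)=\langle\xi_n(x),\eta_n(y)\rangle$ with the same bound $C$. Hence $\iota(\Z^{(D)}\rtimes F)$ is weakly amenable.

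Weak amenability is also invariant under group isomorphism, since one can transport $\varphi_n$, $\xi_n$, $\eta_n$ along the isomorphism. So $\Z^{(D)}\rtimes F$ itself is weakly amenable.

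Now apply Proposition~\ref{lem:mean} with $G=F$ and $X=D$. Its hypotheses hold: $F$ is a countable discrete group, being finitely generated, or simply because it has countably many PL maps with dyadic data. The set $D\subseteq\Z[1/2]$ is countable. Therefore the action of $F$ on $D$ is extensively amenable.

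None of these steps is a real obstacle, since all the substance lies in Propositions~\ref{lem:mean} and~\ref{prop:embedding}. The only point to check is that the semidirect product embedded in Proposition~\ref{prop:embedding} uses exactly the same action as in Proposition~\ref{lem:mean}, namely $(g\cdot\eta)(x)=\eta(g^{-1}x)$ for the natural action on $D$. This holds by the way both statements are phrased.
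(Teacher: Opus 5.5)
Your proposal is correct and follows the paper's proof exactly: you embed $\Z^{(D)}\rtimes F$ into $F$ via Proposition~\ref{prop:embedding}, pass weak amenability to the subgroup, and then apply Proposition~\ref{lem:mean} with $G=F$ and $X=D$. Your extra checks (invariance under isomorphism, countability of $F$ and $D$, and matching the action in the semidirect product) are sound details that the paper leaves implicit.
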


\begin{proof}
Suppose that $F$ is weakly amenable. By
Proposition~\ref{prop:embedding},
$\Z^{(D)}\rtimes F$ is isomorphic to a subgroup of $F$.
Since weak amenability passes to subgroups,
$\mathbb Z^{(D)}\rtimes F$ is weakly amenable.
By Proposition~\ref{lem:mean}, the action of 
$F$ on $D$ is extensively amenable. 
\end{proof}

\begin{corollary}\label{cor:waa}
    If $F$ is weakly amenable, then $F$ is amenable.
\end{corollary}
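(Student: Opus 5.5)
The plan is to combine Corollary~\ref{prop:reduction} with Chornyi's equivalence \cite[Corollary~3]{Chornyi}. Suppose that $F$ is weakly amenable. Corollary~\ref{prop:reduction} then says that the natural action of $F$ on $D$ is extensively amenable. This holds because $\Z^{(D)}\rtimes F$ embeds in $F$ by Proposition~\ref{prop:embedding}, weak amenability passes to this subgroup, and Proposition~\ref{lem:mean} then applies.

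The next step is to transfer extensive amenability from $D$ to $D_0$. Since $F$ fixes $0$ and $1$, we have $D=D_0\sqcup\{0,1\}$, and the following map is $F$-equivariant:
\[
\Pfin(D)\to\Pfin(D_0),\qquad P\mapsto P\cap D_0 .
\]
Let $\nu$ be an invariant mean on $\Pfin(D)$ witnessing extensive amenability. Pushing $\nu$ forward along this map gives an $F$-invariant mean on $\Pfin(D_0)$. For a finite $M\subseteq D_0$, the set of $P\in\Pfin(D)$ with $M\subseteq P$ has $\nu$-measure $1$, and every such $P$ satisfies $M\subseteq P\cap D_0$. So the pushforward gives measure $1$ to $\{Q\in\Pfin(D_0): M\subseteq Q\}$, and the action on $D_0$ is extensively amenable. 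The paper already notes this equivalence, so this step only needs a sentence.

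Finally, Chornyi's theorem states that $F$ is amenable if and only if its action on $D_0$ is extensively amenable. Hence $F$ is amenable, which proves the corollary.

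There is no substantial obstacle, since all the work is in Propositions~\ref{lem:mean} and~\ref{prop:embedding}. The only point to check is that Chornyi's equivalence is quoted in the right direction: extensive amenability of the action on $D_0$ implies amenability of $F$. Combined with Theorem~\ref{thm:noname}, this corollary immediately yields Theorem~\ref{thm:nonwa}. Alternatively, one could bypass Chornyi and contradict Theorem~\ref{thm:kl} directly.
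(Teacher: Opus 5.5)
Your proof is correct and is essentially the paper's: Corollary~\ref{prop:reduction} gives extensive amenability of the action on $D$, and Chornyi's equivalence \cite[Corollary~3]{Chornyi} then yields amenability of $F$. Your pushforward along $P\mapsto P\cap D_0$ just makes explicit the $D$-versus-$D_0$ equivalence that the paper states in the introduction.
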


\begin{proof}
By Corollary~\ref{prop:reduction}, the action of $F$ on $D$ is extensively
amenable. Chornyi's equivalence \cite[Corollary~3]{Chornyi} then implies that $F$ is amenable.
\end{proof}

\begin{proof}[Proof of Theorem~\ref{thm:nonwa}]
If $F$ were weakly amenable, Corollary~\ref{cor:waa}
would imply that $F$ is amenable, 
contradicting Theorem~\ref{thm:noname}.
\end{proof}
Alternatively, Corollary~\ref{prop:reduction} shows that weak amenability of $F$
would contradict Theorem~\ref{thm:kl}.

\section{Proof of Proposition \ref{lem:mean}}\label{sec:mean}

We recall the following theorem. 

\begin{theorem}[Ozawa, {\cite[Theorem~A]{Oz}}]\label{thm:ozawa}
Let $W$ be a countable weakly amenable discrete group and let $N\lhd W$ be
amenable. Then there is a mean $m$ on $N$ invariant under left translations
by $N$ and conjugation by $W$. 
\end{theorem}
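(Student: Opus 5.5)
Since this statement is Ozawa's theorem, which the paper cites rather than proves, I will only sketch the strategy I would follow to reprove it. The goal is to turn the Cowling--Haagerup data $(\varphi_n,\xi_n,\eta_n)$ for $W$, together with a Følner sequence for $N$, into finitely supported probability densities on $N$. These densities should be asymptotically invariant under left translation by $N$ and under conjugation by $W$. The mean $m$ is then a weak-$*$ cluster point of them in $\ell^\infty(N)^*$. Left $N$-invariance will come from amenability of $N$. Conjugation invariance will come from the pointwise convergence $\varphi_n\to1$, because conjugation by $w$ moves a point of $N$ only by a bounded amount in $W$.

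\emph{Step 1.} Fix a Følner sequence $(E_k)$ for $N$. For each $n$ and each $x\in N$, I would consider the function
\[
 a\mapsto \langle \xi_n(ax),\eta_n(a)\rangle=\varphi_n(x),
\]
and average it over $a\in E_k$. The point is to use the vectors $\xi_n(a)$, $\eta_n(a)$ for $a\in N$ and build from them a vector-valued function $\zeta_{n,k}\colon N\to H_n$, for instance
\[
 \zeta_{n,k}(y)=|E_k|^{-1/2}\,\mathbf 1_{E_k}(y)\,\xi_n(y).
\]
Pairings of translates of $\zeta_{n,k}$ against the analogous $\eta$-version then recover $\varphi_n$ up to a Følner error.

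\emph{Step 2.} Since each $\varphi_n$ is finitely supported, the quantities $y\mapsto\langle\xi_n(y),\eta_n(y')\rangle$ are nonzero only for $y'^{-1}y$ in a finite set. This gives a finitely supported function $f_{n,k}$ on $N$ with $\|f_{n,k}\|_1$ bounded by $C^2$. Its sum is close to $1$ on the $E_k$-average, because $\varphi_n(e)\to1$. Under conjugation by a fixed $w\in W$, $f_{n,k}$ changes in $\ell^1$ by an amount controlled by $\sup_{x}|\varphi_n(wxw^{-1}x^{-1})-1|$ on the relevant region. To make this small I would use the identity $\varphi_n(y^{-1}x)=\langle\xi_n(x),\eta_n(y)\rangle$ with $x$ and $y$ replaced by $wx$ and $wy$. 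This compares the data at $x$ with the data at $wx$ and $xw$ via $\varphi_n(w^{\pm1})\to1$.

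\emph{Step 3, the main obstacle.} The functions $f_{n,k}$ need not be positive, because the bound $C$ may exceed $1$ and $\varphi_n$ need not be positive definite. A weak-$*$ limit of normalized $f_{n,k}$ is therefore only a bounded invariant functional, not a mean. I would first pass to $|f_{n,k}|$ and use the inequality $\bigl||u|-|v|\bigr|\le|u-v|$ to keep approximate invariance. The danger is that the normalization then blows up: $\sum|f_{n,k}|$ could be much larger than $\bigl|\sum f_{n,k}\bigr|\approx1$. To control this, I expect to need Ozawa's actual device. One ingredient is a Grothendieck-type factorization producing a positive functional on $\ell^\infty(N)\rtimes W$ (or on the relevant operator system) that dominates the bounded invariant functional. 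Another is a convexity (Day-type) argument on the set of conjugation-invariant bounded functionals: it shows that a nonzero invariant bounded functional on $\ell^\infty(N)$ that is left $N$-invariant forces an invariant positive one. Taking the positive part (Jordan decomposition), normalizing, and checking that invariance survives would then give the mean $m$.
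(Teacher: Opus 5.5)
The paper gives no proof of this theorem; it imports it from Ozawa. Your sketch therefore has to stand on its own, and it has a genuine gap. The gap is in Step~2, not where you locate it.

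\textbf{Step 2 is where the argument fails.} You never define $f_{n,k}$, and the natural readings do not work.
\begin{itemize}
\item Pairing $\zeta_{n,k}$ with its $\eta$-analogue at the same point gives $\langle\xi_n(y),\eta_n(y)\rangle=\varphi_n(e)$. So $f_{n,k}=\varphi_n(e)\,|E_k|^{-1}\mathbf 1_{E_k}$. This is the reading that matches your claims ``$\|f_{n,k}\|_1\le C^2$, sum close to $1$''. But it sees the multipliers only through $\varphi_n(e)$, so no conjugation invariance can come out of it. If it did, it would hold for every group with an amenable normal subgroup, including $\mathbb Z^{(F_2)}\lhd\mathbb Z\wr F_2$, where no translation- and conjugation-invariant mean exists.
\item Pairing translates instead returns $\varphi_n|_N$. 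That function is neither $\ell^1$-bounded by $C^2$ nor of sum near $1$.
\end{itemize}

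\textbf{The heuristic behind your conjugation estimate is false.} Conjugation by $w$ does not move $x\in N$ a bounded distance in a one-sided invariant metric. In $\mathbb Z\wr F_2$ with $x=\delta_s$, the element $x^{-1}wxw^{-1}$ is the lamp configuration $\delta_{ws}-\delta_s$, whose length is unbounded in $s$. Conjugation is a bounded left move followed by a bounded right move. The identity $\varphi_n(y^{-1}x)=\langle\xi_n(x),\eta_n(y)\rangle$ is invariant only under simultaneous left multiplication, so only the left move comes for free.

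\textbf{The decisive problem is $C>1$.} Once $C>1$, the relations $\langle\xi_n(xw),\eta_n(x)\rangle=\varphi_n(w)\approx1$ and $\langle\xi_n(x),\eta_n(x)\rangle\approx1$ say nothing about $\|\xi_n(xw)-\xi_n(x)\|$. Vectors of norm up to $C$ can have inner product $1$ and still be far apart. This is exactly where weak amenability differs from amenability ($C=1$). Pointwise convergence $\varphi_n\to1$ alone gives no $\ell^1$-control of anything built from these vectors.

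\textbf{Step~3 is not an obstacle in this commutative setting.} For $f\in\ell^1(N)$ and any permutation $\alpha$ of $N$, we have $\bigl\||f|\circ\alpha-|f|\bigr\|_1\le\|f\circ\alpha-f\|_1$. Dividing by $\|f\|_1\ge|\sum_x f(x)|\approx1$ can only shrink errors, so normalization never blows up. Equivalently, let $\mu$ be a bounded finitely additive measure on $N$, invariant under translations and conjugation, with $\mu(N)=1$. Its total variation is again invariant and has mass at least $1$. So no Grothendieck factorization or Day-type argument is needed for positivity here.

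\textbf{What is missing.} You need a mechanism that turns the cb bound and the amenability of $N$ into uniformly bounded functionals with value near $1$ at the unit, approximately invariant under both $N$-translations and $W$-conjugation. That is the entire content of Ozawa's theorem, and it requires operator-algebraic input. For instance, for amenable $N$ the restriction $\varphi_n|_N$ has Fourier--Stieltjes norm at most its cb norm, hence at most $C^2$ in the paper's normalization. The functionals one obtains this way naturally live on noncommutative algebras and are only weakly approximately invariant. That is where convexity arguments and the equivariance and continuity of $\phi\mapsto|\phi|$ are genuinely needed. Your proposal supplies none of this, so as it stands it is a heuristic rather than a proof.
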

Proposition \ref{lem:mean}  follows from
Theorem~\ref{thm:ozawa} and a general result
\cite[Theorem~1.3]{JMMdS}, applied to the functor
$Y\mapsto\mathbb Z^{(Y)}$. We include a direct proof of this special case.

\begin{proof}
By assumption, $\Z^{(X)}\rtimes G$ is weakly amenable.
The subgroup $\Z^{(X)}$ is abelian, hence amenable, and normal
in $\Z^{(X)}\rtimes G$.
Theorem~\ref{thm:ozawa} gives a mean $m$ on $\Z^{(X)}$
invariant under translations by $\Z^{(X)}$ and conjugation by $G$.

For $x\in X$ and $k\in\Z$, put
$
A_{x,k}=\{\eta\in\Z^{(X)}:\eta(x)=k\}.
$
We first show that $m(A_{x,0})=0$ for every $x\in X$.
Define $e_x\in\Z^{(X)}$ by $e_x(x)=1$ and
$e_x(y)=0$ for $y\ne x$.
Translation by $e_x$  maps
$A_{x,k}$ onto $A_{x,k+1}$.
Translation invariance of $m$ gives
$
m(A_{x,k})=m(A_{x,0})$ for every $k \in \Z$.
For every positive integer $r$, the sets
$A_{x,0},\ldots,A_{x,r-1}$ are pairwise disjoint.
Finite additivity gives
$
r\,m(A_{x,0}) \le 1$.
Thus 
$m(A_{x,0})=0$.

Let $M\subseteq X$ be finite.
The set $\bigcup_{x\in M}A_{x,0}$ consists of the functions
that vanish at some point of $M$.
Since  $m(A_{x,0})=0$ for every $x$, 
\[
0\le m\Bigl(\bigcup_{x\in M}A_{x,0}\Bigr)
\le \sum_{x\in M}m(A_{x,0})=0.
\]
The complement of  $\bigcup_{x\in M}A_{x,0}$ consists of the functions whose supports contain $M$.
Therefore
\begin{equation}\label{eq:support}
m\{\eta\in\Z^{(X)}:M\subseteq\supp(\eta)\}
=1-m\Bigl(\bigcup_{x\in M}A_{x,0}\Bigr)=1.
\end{equation}

Now, for each family $\mathcal A\subseteq\Pfin(X)$, define
\[
\nu(\mathcal A)
=m\{\eta\in\Z^{(X)}:\supp(\eta)\in\mathcal A\}.
\]
We have $\nu(\mathcal A)\ge0$ because $m$ is nonnegative.
Every function in $\Z^{(X)}$ has finite support, so
$\nu(\Pfin(X))=m(\Z^{(X)})=1$.
For disjoint families $\mathcal A$ and $\mathcal B$,
the corresponding sets of functions are disjoint.
Finite additivity of $m$ therefore gives
$
\nu(\mathcal A\cup\mathcal B)
=\nu(\mathcal A)+\nu(\mathcal B).
$
Hence $\nu$ is a mean on $\Pfin(X)$.

We identify $\eta\in\Z^{(X)}$ with $(\eta,1)$ and
$g\in G$ with $(0,g)$ in $\Z^{(X)}\rtimes G$.
Since
\[
(0,g)(\eta,1)(0,g)^{-1}
=(g\cdot\eta,g)(0,g^{-1})
=(g\cdot\eta,1),
\]
 conjugation by $(0,g)$ acts on $\Z^{(X)}$
by $\eta\mapsto g\cdot\eta$.

Let $g\in G$ and $\mathcal A\subseteq\Pfin(X)$.
Write $g\mathcal A=\{gP:P\in\mathcal A\}$.
Since
$
\supp(g\cdot\eta)=g\supp(\eta),
$
conjugation by $(0,g)$ maps the set
$
\{\eta\in\Z^{(X)}:\supp(\eta)\in\mathcal A\}
$
onto the set
$
\{\eta\in\Z^{(X)}:\supp(\eta)\in g\mathcal A\}.
$

Since $m$ is invariant under conjugation, it takes the same
value on these two sets.
By the definition of $\nu$, these values are
$\nu(\mathcal A)$ and $\nu(g\mathcal A)$.
Thus $\nu(g\mathcal A)=\nu(\mathcal A)$, proving that
$\nu$ is $G$-invariant.

Finally, for every finite $M\subseteq X$, equation~\eqref{eq:support}
gives
\[
\nu\{P\in\Pfin(X):M\subseteq P\}
=m\{\eta\in\Z^{(X)}:M\subseteq\supp(\eta)\}=1.
\]
Hence the action of  $G$ on $X$ is extensively amenable.
\end{proof}

\section{Proof of Proposition \ref{prop:embedding}}\label{sec:embed}
We replace each point of $D_0$ by an interval and lift the action of $F$.
Our construction is closely related to
\cite[Example~2.14]{BF}, where Belk--Forrest realize
$F^{(D_0)}\rtimes F$, for the natural action on $D_0$,
as a group of homeomorphisms of an interval.
We give a direct dyadic construction below.

We denote the lifted action by $\rho$.
The main step is to prove that $\rho(f)\in F$ for every $f\in F$.
This is Lemma~3.1.
We then put a copy of $\mathbb Z$ in each inserted interval
to obtain an embedding of $\mathbb Z^{(D_0)}\rtimes F$ into $F$.
Finally, we pass from $D_0$ to $D$.

\bigskip
\noindent\emph{Inserting intervals into $[0,1]$.}
To construct a homomorphism $\rho\colon F\to F$,
we first define  intervals $J_d \subset (0,1)$ for every $d \in D_0$.
Use an original copy and a new copy of $[0,1]$.
First bisect the original interval at $1/2$.
In the new copy, put $J_{1/2}=(1/2,3/4)$.
The closed intervals $[0,1/2]$ and $[3/4,1]$ on its two sides
correspond to the original halves $[0,1/2]$ and $[1/2,1]$, respectively.

At each subsequent step, bisect every interval of the original subdivision.
Divide each corresponding interval in the new copy into three pieces,
from left to right, in the proportions $1/2,1/4,1/4$.
If $d \in D_0$ is the midpoint of the original interval, call the interior
of the middle piece $J_d$.
The left and right pieces correspond to the two original halves.
For example, the second step bisects the original halves at $1/4$ and $3/4$.
In the new copy this gives $J_{1/4}=(1/4,3/8)$ and
$J_{3/4}=(7/8,15/16)$.
See Figure~\ref{fig:inserting-intervals}.

Repeat this procedure indefinitely.
Every $d\in D_0$ occurs as an original midpoint after finitely many
bisections, so this defines $J_d$ for every $d\in D_0$.
Once inserted, each $J_d$ is left unchanged.
At each step, the whole interval remains $[0,1]$.
All interval endpoints in this construction are dyadic, and all interval
lengths are powers of $2$.
The intervals $J_d$ occur in the same order as the points $d$.
After \(n\) steps, every remaining closed interval has length at most \(2^{-n}\).
Thus the union of all $J_d$ is dense in $[0,1]$.

\begin{figure}[htbp]
\centering
\begin{tikzpicture}[x=1cm,y=1cm,font=\small,line width=.55pt]
\node[anchor=east] at (-.2,3.1) {Original};
\draw (0,3.1)--(10,3.1);
\foreach \x/\t in {0/0,2.5/{1/4},5/{1/2},7.5/{3/4},10/1}{
\fill (\x,3.1) circle (1.2pt);
\node[above=4pt] at (\x,3.1) {$\t$};}
\node[anchor=east] at (-.2,1.8) {Step 1};
\draw (0,1.6) rectangle (5,2);
\draw[fill=gray!15] (5,1.6) rectangle (7.5,2);
\draw (7.5,1.6) rectangle (10,2);
\node at (6.25,1.8) {$J_{1/2}$};
\foreach \x/\t in {0/0,5/{1/2},7.5/{3/4},10/1}
\node[below=3pt,font=\scriptsize] at (\x,1.6) {$\t$};
\node[anchor=east] at (-.2,.3) {Step 2};
\foreach \a/\b in {0/2.5,3.75/5,7.5/8.75,9.375/10}
\draw (\a,.1) rectangle (\b,.5);
\foreach \a/\b/\t in {2.5/3.75/{J_{1/4}},5/7.5/{J_{1/2}},8.75/9.375/{J_{3/4}}}{
\draw[fill=gray!15] (\a,.1) rectangle (\b,.5);}
\node at (3.125,.3) {$J_{1/4}$};
\node at (6.25,.3) {$J_{1/2}$};
\node[above=3pt] at (9.0625,.5) {$J_{3/4}$};
\foreach \x/\t in {0/0,10/1}
\node[below=3pt,font=\scriptsize] at (\x,.1) {$\t$};
\end{tikzpicture}
\caption{The first two steps. The subscript of $J_d$ is the position of
$d$ in the original interval.}
\label{fig:inserting-intervals}
\end{figure}

\medskip
\Needspace{6\baselineskip}
\noindent\emph{Defining $\rho(f)$.}
Let $f\in F$.
On each $J_d$, define $\rho(f)$ to be the increasing affine bijection
onto $J_{f(d)}$.
Since $f$ is increasing and permutes $D_0$, these maps together give an
increasing bijection of the dense set $\bigcup_{d\in D_0}J_d$ onto itself.
It extends uniquely to an increasing homeomorphism of $[0,1]$,
which we also denote by $\rho(f)$.
On each $J_d$, its slope is the ratio of the lengths of $J_{f(d)}$ and
$J_d$, and hence is a power of $2$.

\medskip
\noindent\emph{The map $\rho$ is a homomorphism.}
Let $f,g\in F$.
The map $\rho(g)$ sends $J_d$ affinely onto $J_{g(d)}$, and
$\rho(f)$ sends this interval affinely onto $J_{f(g(d))}$.
Thus $\rho(f)\rho(g)$ and $\rho(fg)$ are the same increasing affine
map on each $J_d$.
Since the union of the $J_d$ is dense and both maps are continuous,
they agree on $[0,1]$.
Hence $\rho(fg)=\rho(f)\rho(g)$.

\medskip
Let
\[
\mathcal C=[0,1]\setminus\bigcup_{d\in D_0}J_d.
\]

\Needspace{8\baselineskip}
\noindent\emph{The action on $\mathcal C$ is faithful.}
The maps $\rho(f)$ preserve $\mathcal C$.
If $\rho(f)$ fixes $\mathcal C$ pointwise, it fixes both endpoints of every
$J_d$. Since it maps $J_d$ onto $J_{f(d)}$, we have $f(d)=d$ for every
$d\in D_0$, so $f=\mathrm{id}$.
Thus the action of $F$ on $\mathcal C$ defined by $\rho$ is faithful.

It remains to show that $\rho(f)$ belongs to $F$.

\Needspace{7\baselineskip}
\begin{lemma}\label{lem:rho-in-F}
For every $f\in F$, the map $\rho(f)$ belongs to $F$.
\end{lemma}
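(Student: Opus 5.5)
The plan is to describe the insertion construction through a homeomorphism $\phi\colon[0,1]\to[0,1]$ and a dyadic-interval bookkeeping. The key tool is to reduce to standard dyadic intervals. Each step of the construction attaches to every standard dyadic interval $I=[a/2^n,(a+1)/2^n]$ of the original copy an interval $\widehat I$ in the new copy. These intervals have dyadic endpoints and power-of-$2$ length.

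The construction has a self-similarity property. Inside $\widehat I$, the intervals $J_d$ for $d$ in the interior of $I$, together with the remaining closed pieces, form the image of the whole pattern on $[0,1]$ (all $J_d$, $d\in D_0$) under the increasing affine map $\alpha_I\colon[0,1]\to\widehat I$. This holds because the subdivision rule (proportions $1/2,1/4,1/4$) is the same at every level, and the first step has exactly these proportions: $[0,1/2]$, $J_{1/2}=(1/2,3/4)$, $[3/4,1]$. Precisely, if $\beta_I\colon[0,1]\to I$ is the increasing affine map, then $\alpha_I(J_d)=J_{\beta_I(d)}$ for $d\in D_0$. Also $\alpha_I$ has slope a power of $2$ and dyadic translation, so $\alpha_I\in$ the dyadic affine group.

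Now let $f\in F$. Choose a subdivision of $[0,1]$ into standard dyadic intervals $I_1<\dots<I_k$ such that $f$ is affine on each $I_i$ and $f(I_i)=I_i'$ is again a standard dyadic interval. This is the standard tree-pair description of elements of $F$. The closures of the $\widehat{I_i}$ together with the closures of the $J_d$, where $d$ runs over the common endpoints of consecutive $I_i$, tile $[0,1]$ in order. The same holds for the $I_i'$.

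On each $\widehat{I_i}$, the map $\rho(f)$ should coincide with $\alpha_{I_i'}\circ\alpha_{I_i}^{-1}$. To check this, note that $f|_{I_i}=\beta_{I_i'}\beta_{I_i}^{-1}$, since both are increasing affine maps $I_i\to I_i'$. So for $d\in D_0$ in the interior of $I_i$, writing $d=\beta_{I_i}(e)$, we get
\[
\alpha_{I_i'}\alpha_{I_i}^{-1}(J_d)=\alpha_{I_i'}(J_e)=J_{\beta_{I_i'}(e)}=J_{f(d)},
\]
and the map is affine on $J_d$. It therefore agrees with $\rho(f)$ on a dense subset of $\widehat{I_i}$, and hence on all of $\widehat{I_i}$ by continuity.

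On each separating $J_d$, $\rho(f)$ is affine onto $J_{f(d)}$ with dyadic endpoints and power-of-$2$ slope. Hence $\rho(f)$ is piecewise dyadic affine with at most $2k-1$ pieces, and its breakpoints are dyadic. So $\rho(f)\in F$.

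The main obstacle is stating and verifying the self-similarity claim cleanly. I would prove it by induction on the construction step. Define $\widehat I$ recursively: $\widehat{[0,1]}=[0,1]$, and the two halves of $I$ go to the left piece and right piece of $\widehat I$ in proportions $1/2$ and $1/4$. Then show $\alpha_I\circ\alpha_{I_0}=\alpha_{\beta_I(I_0)}$ for standard dyadic $I_0$, which holds because the recursive rule commutes with affine rescaling. A second, minor point is the existence of the subdivision with images standard dyadic intervals. I would cite this as the well-known characterization of $F$ by dyadic subdivisions (tree pairs).
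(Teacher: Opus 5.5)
Your proposal is correct and follows essentially the same route as the paper: you choose a subdivision into standard dyadic intervals that $f$ maps affinely onto standard dyadic intervals, and show $\rho(f)$ is affine on the corresponding closed pieces (the paper's $K_i$, $K_i'$ are exactly your $\widehat{I_i}$, $\widehat{I_i'}$) because the $1/2,1/4,1/4$ rule is the same at every scale; you then conclude by density and continuity, and $\rho(f)$ is affine on the separating $J_d$ by definition. The differences are cosmetic: the paper builds the subdivision by hand, with $2^n$ equal domain intervals, instead of citing tree pairs, and it leaves the self-similarity informal where you state it as $\alpha_I(J_d)=J_{\beta_I(d)}$ and prove it by induction.
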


\begin{proof}
Fix $f\in F$.
We use finitely many bisections to show that $\rho(f)$ has finitely many
affine pieces.
First divide the domain of $f$ into $2^n$ equal intervals, with $n$
large enough that $f$ is affine on each interval.
Next, bisect the image of each interval as needed until each piece is one of
the intervals in a subdivision of $[0,1]$ into $2,4,8,\ldots$ equal parts.
This is possible because the image endpoints are dyadic and the image
lengths are powers of $2$.
Pull these subdivisions back by $f$.
Since $f$ is affine on each domain interval, this also subdivides the
domain intervals by bisection.
Finally, increase $n$ if necessary and refine the domain to $2^n$ equal
intervals. Their images still have the required form, but need not have
the same length.

Figure~\ref{fig:rho} (left) shows why further bisections of the image of an interval may be needed.
The map $f$ is affine on $[1/4,1/2]$, but its image $[1/4,3/4]$
is not an interval obtained by repeated bisection of $[0,1]$.
Bisecting this image at $1/2$ adds the partition point $3/8$ in the domain.
The figure uses eight equal intervals in the domain.

%All the intervals $J_d$ have already been inserted.
Let $d_1<\cdots<d_m \, (m=2^n-1)$ be the subdivision points in
$(0,1)$ of the original domain chosen above.
Consider $J_{d_1},\ldots,J_{d_m}$ in the domain and
$J_{f(d_1)},\ldots,J_{f(d_m)}$ in the image.
Divide the domain at the endpoints of \(J_{d_1},\ldots,J_{d_m}\), and the image at the endpoints of \(J_{f(d_1)},\ldots,J_{f(d_m)}\). 
See Figure~\ref{fig:rho} (right).

For $1\leq i<m$, let $K_i$ be the closed interval between
$J_{d_i}$ and $J_{d_{i+1}}$.
Let $K_i'$ be the closed interval between
$J_{f(d_i)}$ and $J_{f(d_{i+1})}$.
We show that $\rho(f)$ is affine on $K_i$.
By our choice of $d_i$'s, 
both $[d_i,d_{i+1}]$ and its image under $f$
are obtained by repeated bisection of $[0,1]$.

The interval $K_i$ contains every $J_d$ with $d_i<d<d_{i+1}$.
Inside $K_i$ and $K_i'$, the intervals are inserted using the same
proportions $1/2,1/4,1/4$ at each step.
Since $f$ is affine on $[d_i,d_{i+1}]$, it sends each successive
bisection point of this interval to the corresponding bisection point
of its image.
Thus the increasing affine map from $K_i$ onto $K_i'$ sends every
$J_d\subset K_i$ onto $J_{f(d)}$ and therefore agrees with $\rho(f)$
on each $J_d$.
Their union is dense in $K_i$, so the two maps agree on all of $K_i$
by continuity.
The same argument applies to the two closed intervals next to $0$ and $1$.

On each selected $J_d$, the map $\rho(f)$ is affine by definition.
Thus it is affine on every piece of the finite partition.
Each piece and its image have dyadic endpoints and length a power of $2$.
It follows that $\rho(f)$ has finitely many dyadic breakpoints and
slopes in $2^{\mathbb Z}$.
Hence $\rho(f)\in F$.
\end{proof}

\begin{figure}[htbp]
\centering
\begin{tikzpicture}[x=1cm,y=1cm,font=\small,line width=.45pt]
\node at (3.25,3.15) {Original intervals};
\draw (0,2.3)--(6.5,2.3);
\draw (0,2.265)--(0,2.335);
\draw (0.8125,2.265)--(0.8125,2.335);
\draw (1.625,2.265)--(1.625,2.335);
\draw (2.4375,2.265)--(2.4375,2.335);
\draw (3.25,2.265)--(3.25,2.335);
\draw (4.0625,2.265)--(4.0625,2.335);
\draw (4.875,2.265)--(4.875,2.335);
\draw (5.6875,2.265)--(5.6875,2.335);
\draw (6.5,2.265)--(6.5,2.335);
\node at (0,2.62) {$0$};
\node at (1.625,2.62) {$\frac{1}{4}$};
\node at (2.4375,2.62) {$\frac{3}{8}$};
\node at (3.25,2.62) {$\frac{1}{2}$};
\node at (6.5,2.62) {$1$};
\draw (0,0)--(6.5,0);
\draw (0,-0.035)--(0,0.035);
\draw (0.8125,-0.035)--(0.8125,0.035);
\draw (1.625,-0.035)--(1.625,0.035);
\draw (3.25,-0.035)--(3.25,0.035);
\draw (4.875,-0.035)--(4.875,0.035);
\draw (5.28125,-0.035)--(5.28125,0.035);
\draw (5.6875,-0.035)--(5.6875,0.035);
\draw (6.09375,-0.035)--(6.09375,0.035);
\draw (6.5,-0.035)--(6.5,0.035);
\node at (0,-0.32) {$0$};
\node at (1.625,-0.32) {$\frac{1}{4}$};
\node at (3.25,-0.32) {$\frac{1}{2}$};
\node at (4.875,-0.32) {$\frac{3}{4}$};
\node at (6.5,-0.32) {$1$};
\draw[gray!65,dashed] (0,2.23)--(0,.07);
\draw[gray!65,dashed] (0.8125,2.23)--(0.8125,.07);
\draw[fill=white] (0.8125,2.3) circle (1.6pt);
\draw[fill=white] (0.8125,0) circle (1.6pt);
\draw[gray!65,dashed] (1.625,2.23)--(1.625,.07);
\draw[fill=black] (1.625,2.3) circle (1.6pt);
\draw[fill=black] (1.625,0) circle (1.6pt);
\draw[gray!65,dashed] (2.4375,2.23)--(3.25,.07);
\draw[fill=white] (2.4375,2.3) circle (1.6pt);
\draw[fill=white] (3.25,0) circle (1.6pt);
\draw[gray!65,dashed] (3.25,2.23)--(4.875,.07);
\draw[fill=black] (3.25,2.3) circle (1.6pt);
\draw[fill=black] (4.875,0) circle (1.6pt);
\draw[gray!65,dashed] (4.0625,2.23)--(5.28125,.07);
\draw[fill=white] (4.0625,2.3) circle (1.6pt);
\draw[fill=white] (5.28125,0) circle (1.6pt);
\draw[gray!65,dashed] (4.875,2.23)--(5.6875,.07);
\draw[fill=white] (4.875,2.3) circle (1.6pt);
\draw[fill=white] (5.6875,0) circle (1.6pt);
\draw[gray!65,dashed] (5.6875,2.23)--(6.09375,.07);
\draw[fill=white] (5.6875,2.3) circle (1.6pt);
\draw[fill=white] (6.09375,0) circle (1.6pt);
\draw[gray!65,dashed] (6.5,2.23)--(6.5,.07);
\node[fill=white,inner sep=3pt] at (3.25,1.15) {$f$};
\begin{scope}[xshift=7.8cm]
\node at (3.25,3.15) {After insertion};
\draw[fill=white] (0,2.08) rectangle (0.8125,2.52);
\draw[fill=gray!18] (0.8125,2.08) rectangle (1.21875,2.52);
\draw[fill=white] (1.21875,2.08) rectangle (1.625,2.52);
\draw[fill=gray!18] (1.625,2.08) rectangle (2.4375,2.52);
\draw[fill=white] (2.4375,2.08) rectangle (2.84375,2.52);
\draw[fill=gray!18] (2.84375,2.08) rectangle (3.046875,2.52);
\draw[fill=white] (3.046875,2.08) rectangle (3.25,2.52);
\draw[fill=gray!18] (3.25,2.08) rectangle (4.875,2.52);
\draw[fill=white] (4.875,2.08) rectangle (5.28125,2.52);
\draw[fill=gray!18] (5.28125,2.08) rectangle (5.484375,2.52);
\draw[fill=white] (5.484375,2.08) rectangle (5.6875,2.52);
\draw[fill=gray!18] (5.6875,2.08) rectangle (6.09375,2.52);
\draw[fill=white] (6.09375,2.08) rectangle (6.296875,2.52);
\draw[fill=gray!18] (6.296875,2.08) rectangle (6.3984375,2.52);
\draw[fill=white] (6.3984375,2.08) rectangle (6.5,2.52);
\node at (0,2.74) {$0$};
\node at (6.5,2.74) {$1$};
\node[font=\fontsize{6}{7}\selectfont,inner sep=0pt] at (1.015625,2.3) {$J_{1/8}$};
\node[font=\small,inner sep=0pt] at (2.03125,2.3) {$J_{1/4}$};
\node[font=\small,inner sep=0pt] at (4.0625,2.3) {$J_{1/2}$};
\draw[fill=white] (0,-0.22) rectangle (0.8125,0.22);
\draw[fill=gray!18] (0.8125,-0.22) rectangle (1.21875,0.22);
\draw[fill=white] (1.21875,-0.22) rectangle (1.625,0.22);
\draw[fill=gray!18] (1.625,-0.22) rectangle (2.4375,0.22);
\draw[fill=white] (2.4375,-0.22) rectangle (3.25,0.22);
\draw[fill=gray!18] (3.25,-0.22) rectangle (4.875,0.22);
\draw[fill=white] (4.875,-0.22) rectangle (5.6875,0.22);
\draw[fill=gray!18] (5.6875,-0.22) rectangle (6.09375,0.22);
\draw[fill=white] (6.09375,-0.22) rectangle (6.1953125,0.22);
\draw[fill=gray!18] (6.1953125,-0.22) rectangle (6.24609375,0.22);
\draw[fill=white] (6.24609375,-0.22) rectangle (6.296875,0.22);
\draw[fill=gray!18] (6.296875,-0.22) rectangle (6.3984375,0.22);
\draw[fill=white] (6.3984375,-0.22) rectangle (6.44921875,0.22);
\draw[fill=gray!18] (6.44921875,-0.22) rectangle (6.474609375,0.22);
\draw[fill=white] (6.474609375,-0.22) rectangle (6.5,0.22);
\node at (0,-0.44) {$0$};
\node at (6.5,-0.44) {$1$};
\node[font=\fontsize{6}{7}\selectfont,inner sep=0pt] at (1.015625,0) {$J_{1/8}$};
\node[font=\small,inner sep=0pt] at (2.03125,0) {$J_{1/4}$};
\node[font=\small,inner sep=0pt] at (4.0625,0) {$J_{1/2}$};
\node[font=\fontsize{6}{7}\selectfont,inner sep=0pt] at (5.890625,0) {$J_{3/4}$};
\node at (2.9453125,2.74) {$J_{3/8}$};
\draw[gray!65,dashed] (0.8125,2.01)--(0.8125,.29);
\draw[gray!65,dashed] (1.21875,2.01)--(1.21875,.29);
\draw[gray!65,dashed] (1.625,2.01)--(1.625,.29);
\draw[gray!65,dashed] (2.4375,2.01)--(2.4375,.29);
\draw[gray!65,dashed] (2.84375,2.01)--(3.25,.29);
\draw[gray!65,dashed] (3.046875,2.01)--(4.875,.29);
\draw[gray!65,dashed] (3.25,2.01)--(5.6875,.29);
\draw[gray!65,dashed] (4.875,2.01)--(6.09375,.29);
\node[fill=white,inner sep=3pt] at (3.25,1.15) {$\rho(f)$};
\end{scope}
\end{tikzpicture}
\caption{Finite partitions for $f$ and $\rho(f)$, where
$f(t)=t$ on $[0,1/4]$, $f(t)=2t-1/4$ on $[1/4,1/2]$, and
$f(t)=(t+1)/2$ on $[1/2,1]$.
Both maps act from top to bottom.
Filled circles mark the breakpoints of $f$ and their images.
Open circles mark the other partition points and their images.
The shaded intervals are the selected $J_d$.
The map $\rho(f)$ is affine on each shaded or white interval.
}
\label{fig:rho}
\end{figure}
\FloatBarrier

\begin{proof}[Proof of Proposition~\ref{prop:embedding}]
By Lemma~\ref{lem:rho-in-F}, the homomorphism $\rho$ takes values in $F$.
Its action on $\mathcal C$ is faithful.

\medskip
\noindent\textbf{Embedding $\mathbb Z^{(D_0)}$ into $F$.}
We realize the copy of $\mathbb Z$ indexed by $d$ inside the interval $J_d$.
Fix a nontrivial $z\in F$. It has infinite order.
For each $d\in D_0$, let $a_d\colon[0,1]\to\overline{J_d}$ be the
increasing affine bijection.
We define $z_d$ to equal $a_dza_d^{-1}$ on $\overline{J_d}$ and
the identity elsewhere.

The map $a_d$ has dyadic translation part and slope a power of $2$.
Hence $z_d$ has finitely many dyadic breakpoints and slopes in
$2^{\mathbb Z}$, so $z_d\in F$.
The maps $z_d$ commute because they act in disjoint intervals.

Define
$
L\colon\mathbb Z^{(D_0)}\to F,
$
by
$$
L(\eta)=\prod_{d\in\supp(\eta)}z_d^{\eta(d)}.
$$
The product is finite because $\eta$ has finite support.
On each $J_d$, the map $L(\eta)$ agrees with $z_d^{\eta(d)}$.
It fixes every point of $\mathcal C$.
Since the $z_d$ commute, we have $L(\eta+\theta)=L(\eta)L(\theta)$.
Thus $L$ is a homomorphism.

If $L(\eta)=\mathrm{id}$, then $z_d^{\eta(d)}=\mathrm{id}$ for every $d$.
Since each $z_d$ has infinite order, $\eta(d)=0$ for every $d$.
Thus $L$ is injective.

\medskip
\Needspace{6\baselineskip}
\noindent\textbf{Embedding $\mathbb Z^{(D_0)}\rtimes F$ into $F$.}
Recall that on $\overline{J_d}$, the map $z_d$ is obtained by
conjugating $z$ by the increasing affine map $a_d$.
Applying $a_d$ and then $\rho(f)$ gives the increasing affine map
from $[0,1]$ onto $\overline{J_{f(d)}}$, which is $a_{f(d)}$.
Thus conjugating $z_d$ by $\rho(f)$ is the same as conjugating $z$
by $a_{f(d)}$, giving $z_{f(d)}$.
For each $d\in D_0$ and $f\in F$, we therefore have
$\rho(f)z_d\rho(f)^{-1}=z_{f(d)}$.
Hence
\begin{equation}\label{eq:rho-conjugation}
\begin{aligned}
\rho(f)L(\eta)\rho(f)^{-1}
&=\prod_{d\in\supp(\eta)}
  \bigl(\rho(f)z_d\rho(f)^{-1}\bigr)^{\eta(d)}\\
&=\prod_{d\in\supp(\eta)}z_{f(d)}^{\eta(d)}=L(f\cdot\eta).
\end{aligned}
\end{equation}
The last equality uses $(f\cdot\eta)(f(d))=\eta(d)$.

Define
$
\Phi\colon\mathbb Z^{(D_0)}\rtimes F\to F$
by 
$
\Phi(\eta,f)=L(\eta)\rho(f).
$
Using \eqref{eq:rho-conjugation}, we have
\[
\begin{aligned}
\Phi(\eta,f)\Phi(\theta,g)
&=L(\eta)\rho(f)L(\theta)\rho(g)
 =L(\eta)L(f\cdot\theta)\rho(f)\rho(g)\\
&=L(\eta+f\cdot\theta)\rho(fg)
 =\Phi(\eta+f\cdot\theta,fg).
\end{aligned}
\]
Hence $\Phi$ is a homomorphism.

To prove that $\Phi$ is injective, suppose that $\Phi(\eta,f)=\mathrm{id}$.
Then $\rho(f)=L(\eta)^{-1}$.
Since $L(\eta)$ fixes every point of $\mathcal C$, so does $\rho(f)$.
The action of $F$ on $\mathcal C$ defined by $\rho$ is faithful,
so $f=\mathrm{id}$.
It follows that $L(\eta)=\mathrm{id}$.
Since $L$ is injective, $\eta=0$.
Thus $\Phi$ is injective.
We have therefore embedded $\mathbb Z^{(D_0)}\rtimes F$ into $F$.

\medskip
\Needspace{6\baselineskip}
\noindent\textbf{Passing from $D_0$ to $D$.}
Since $F$ fixes $0$ and $1$, we have
\[
\mathbb Z^{(D)}\rtimes F
\cong\mathbb Z^2\times\bigl(\mathbb Z^{(D_0)}\rtimes F\bigr).
\]
Conjugate the image of $\Phi$ by the affine map $a(t)=1/4+t/2$ and
extend each resulting map by the identity outside $[1/4,3/4]$.
This gives an embedding of $\mathbb Z^{(D_0)}\rtimes F$ into $F$
supported in that interval.
Place affine copies of the infinite cyclic group $\langle z\rangle$
in $[0,1/4]$ and $[3/4,1]$, again extending by the identity.
All three subgroups lie in $F$ and have disjoint supports, so together
they give the required embedding of
$\mathbb Z^2\times\bigl(\mathbb Z^{(D_0)}\rtimes F\bigr)$ into $F$.
\end{proof}
\FloatBarrier

\subsection*{Acknowledgments}
The author thanks the Mathematical Institute of the University of Oxford for its hospitality.

He is grateful to Sang-hyun Kim for sharing his draft \cite{KL} and for a helpful comment on this note.
This short note grew out of an attempt to understand that paper with the help of ChatGPT.
The figures were generated using ChatGPT.

\Needspace{9\baselineskip}

\noindent
\textsc{Koji Fujiwara}\\
Okinawa Institute of Science and Technology Graduate University,
Okinawa 904-0495, Japan\\
KUIAS/RIMS, Kyoto University, Kyoto, 606-8305, Japan\\
\texttt{koji.fujiwara@oist.jp; kfujiwara@math.kyoto-u.ac.jp}


\begin{thebibliography}{99}
\bibitem{BF}
J.~Belk and B.~Forrest,
\emph{Rearrangement groups of fractals},
Trans. Amer. Math. Soc. \textbf{372} (2019), no.~7, 4509--4552.




\bibitem{Chornyi}
M.~Chornyi,
Superharmonic functions on the Lamplighter graph of Thompson's
group $F$,
arXiv:1907.01440, 2019.


\bibitem{GH}
E.~Guentner and N.~Higson,
Weak amenability of CAT(0)-cubical groups,
Geom. Dedicata \textbf{148} (2010), 137--156.

\bibitem{GRT}
E. Guentner, E. Reckwerdt and R. Tessera,
\emph{Proper actions and weak amenability of classical
relatively hyperbolic groups},
Ann. Inst. Fourier \textbf{76} (2026), no.~4, 1369--1399.

\bibitem{JMMdS}
K.~Juschenko, N.~Matte Bon, N.~Monod, and M.~de la Salle,
\emph{Extensive amenability and an application to interval exchanges},
Ergodic Theory Dynam. Systems \textbf{38} (2018), no.~1, 195--219.


\bibitem{KL}
S.-H. Kim and Y. Lodha,
\emph{A note on the proof of nonamenability of $F$},
arXiv:2610.18360, 2026.


\bibitem{OpenAI}
OpenAI,
\emph{Thompson's group $F$ is nonamenable},
OpenAI Math Release preprint, 23 September 2026.

\bibitem{OzHyp}
N.~Ozawa,
Weak amenability of hyperbolic groups,
Groups Geom. Dyn. \textbf{2} (2008), no.~2, 271--280.

\bibitem{Oz}
N.~Ozawa,
\emph{Examples of groups which are not weakly amenable},
Kyoto J. Math. \textbf{52} (2012), no.~2, 333--344.

\end{thebibliography}
\end{document}